\newtheorem{thm}{Theorem}[section]
\newtheorem{cor}[thm]{Corollary}
\newtheorem{lem}[thm]{Lemma}
\newtheorem{rem}[thm]{Remark}
\newcommand{\be}{\begin{equation}}
\newcommand{\ee}{\end{equation}}
\newcommand{\ben}{\begin{enumerate}}
\newcommand{\een}{\end{enumerate}}
\newcommand{\beq}{\begin{eqnarray}}
\newcommand{\eeq}{\end{eqnarray}}
\newcommand{\beqn}{\begin{eqnarray*}}
\newcommand{\eeqn}{\end{eqnarray*}}
\title{On Cartan Spaces with $m$-th Root Metrics}
\author{A. Tayebi, A. Nankali and E. Peyghan}
\begin{document}

\maketitle
\begin{abstract}In this paper, we define some non-Riemannian curvature  properties for  Cartan spaces. We consider Cartan space with the $m$-th root metric. We prove that every $m$-th root Cartan space of isotropic Landsberg curvature, or isotropic mean Landsberg curvature, or isotropic mean Berwald curvature reduces to a  Landsberg, weakly Landsberg and weakly Berwald space, respectively. Then we show that   $m$-th root Cartan space of  almost vanishing $\bf H$-curvature satisfies ${\bf H}=0$.\\\\
{\bf {Keywords}}: Landsberg curvature, mean Landsberg curvature,  mean Berwald curvature, $\bf H$-curvature.\footnote{ 2000
Mathematics subject Classification: 53C60, 53C25.}
\end{abstract}

\section{Introduction}
\'{E}. Cartan  has originally introduced a Cartan space, which is considered as dual of Finsler space \cite{Car}. Then  Rund and Brickell studied the relation between these two spaces \cite{Bric}\cite{Rund}. The theory of Hamilton
spaces was introduced by Miron \cite{Mir2}.
He proved that Cartan space is a particular
case of Hamilton space.

Let us denote the Hamiltonian structure on a manifold $M$ by $(M, H(x,p))$. If the fundamental function $H(x,p)$ is 2-homogeneous on the fibres of the cotangent bundle $T^*M$, then the
notion of Cartan space is obtained \cite{MirHri}\cite{PAT}\cite{PT3}. Indeed, the modern formulation of the notion of Cartan spaces is due of the Miron \cite{Mir2}\cite{Mir3}. Based on the studies of   Kawaguchi \cite{Kaw},
Miron \cite{MirHri},   Hrimiuc-Shimada \cite{HriShi},   Anastasiei-Antonelli \cite{Anas}, Maz\.{e}tis \cite{Maz1}\cite{Maz2}\cite{Maz3}, Urbonas \cite{U} etc., the geometry of Cartan spaces is today an important chapter of differential geometry.

Under  Legendre transformation, the Cartan spaces appear as the dual of the Finsler spaces \cite{Mir2}.  Finsler geometry was developed since 1918 by Finsler, Cartan, Berwald, Akbar-Zadeh, Matsumoto,  Shen and many others, see  \cite{AZ}\cite{ShDiff}. Using this duality several important results in the Cartan spaces can be obtained: the canonical nonlinear
connection, the canonical metrical connection, the notion of $(\alpha, \beta)$-metrics, the theory of $m$-root metrics,  etc \cite{Maz1}\cite{Maz2}\cite{Na}. Therefore, the
theory of Cartan spaces has the same symmetry and beauty like
Finsler geometry. Moreover, it gives a geometrical framework for
the Hamiltonian theory of Mechanics or Physical fields.

The theory of $m$-th root metric has been developed by H. Shimada \cite{Shim}, and applied to Biology as an ecological metric. It is regarded as a direct generalization of Riemannian metric in a sense, i.e., the second root metric is a Riemannian metric.  Recently studies,  shows that the theory of  $m$-th root Finsler metrics play a very important role in physics, theory of space-time structure, general relativity and seismic ray theory \cite{TN1}\cite{TN2}\cite{TPS}.

An $n$-dimensional Cartan space $C^n$ with $m$-th root metric is a Cartan structure $C^n=(M^n,K(x,p))$ on differentiable
$n$-manifold $M^n$ equipped with the fundamental function  $K(x,p)=\sqrt[m]{a^{i_1i_2...i_m}(x)p_{i_1}p_{i_2}...p_{i_m}}$ where $a^{i_1i_2...i_m}(x)$, depending on the position alone, is
symmetric in all the indices $i_1,i_2,...,i_m$ and $m\geq3$.  The Hessian of $K^2$ give us the fundamental tensor $\textbf{g}$ of Cartan space.  Taking a vertical derivation of ${\bf g}$ give us the Cartan torsion ${\bf C}$. The rate of change of the Cartan torsion along geodesics, $\textbf{L}$ is said to be Landsberg curvature. A Cartan metric with  ${\bf L}=cF{\bf C}$ is called  isotropic Landsberg metric, where
$c=c(x)$ is a scalar function on $M$. In this paper, we  prove that
every $m$-root Cartan spaces with isotropic  Landsberg curvature is a  Landsberg space.

\begin{thm}\label{mainthm0}
Let $(M,K)$ be an $m$-th root Cartan space. Suppose that $K$ is
isotropic Landsberg metric, ${\bf L}+cK{\bf C}=0$ for some scalar
function $c=c(x)$ on $M$. Then $K$ reduces to a  Landsberg metric.
\end{thm}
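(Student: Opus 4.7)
The plan is to exploit the rational/algebraic structure forced on the fundamental and Landsberg tensors by the $m$-th root form of $K$. Set $A(x,p):=a^{i_1\cdots i_m}(x)\,p_{i_1}\cdots p_{i_m}$, so that $K=A^{1/m}$ and $K^2=A^{2/m}$, and write $A^i:=\pa A/\pa p_i$, $A^{ij}:=\pa^2 A/\pa p_i\pa p_j$. The Euler identities $p_i A^i=mA$ and $p_j A^{ij}=(m-1)A^i$ will be used repeatedly. A direct computation from the Hessian of $K^2$ then yields representations of the form
\[
g^{ij}=A^{\frac{2}{m}-2}\,P^{ij},\qquad
C^{ijk}=A^{\frac{2}{m}-3}\,Q^{ijk},
\]
where $P^{ij}$ and $Q^{ijk}$ are polynomials in $p$ built algebraically from $A$, $A^i$, $A^{ij}$ and $A^{ijk}$.

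Next I would derive an analogous representation for the Landsberg tensor. Starting from the definition of $\mathbf{L}$ as the rate of change of $\mathbf C$ along the Hamiltonian geodesics of $(M,K)$, and expressing the canonical nonlinear connection of the Cartan space in terms of $g^{ij}$ and $\pa a^{i_1\cdots i_m}/\pa x^j$, one arrives at a formula of the shape
\[
L^{ijk}=A^{\alpha}\,R^{ijk},
\]
where $R^{ijk}$ is a polynomial in $p$ divided by a pure power of $A$, and $\alpha$ is a specific rational exponent determined by counting the $A^{1/m}$ factors contributed by $g^{ij}$, $C^{ijk}$ and the connection coefficients. The bookkeeping here, and in particular verifying that no compensating factor of $A^{1/m}$ is hidden inside $R^{ijk}$, is the main technical work of the proof.

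Substituting these representations into $\mathbf L+cK\mathbf C=0$ and multiplying by a sufficiently high power of $A$ reduces the equation to an identity of the form
\[
\Phi^{ijk}(x,p)+c(x)\,A^{\beta/m}\,\Psi^{ijk}(x,p)=0,
\]
where $\Phi^{ijk}$ and $\Psi^{ijk}$ are polynomial in $p$ and $\beta$ is not a multiple of $m$. Since $A$, as a homogeneous polynomial of degree $m$ in $p$, is generically not a perfect $m$-th power, the function $A^{\beta/m}$ is not a rational function of $p$; consequently the rational and irrational parts of the identity must vanish separately, forcing $c(x)\,\Psi^{ijk}=0$ and $\Phi^{ijk}=0$. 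Because an $m$-th root metric with $m\geq 3$ is not Riemannian, the Cartan torsion is not identically zero, so $c(x)\equiv 0$ on $M$ and therefore $\mathbf L=0$. The hard part is pinning down the exponent $\alpha$ and the polynomial $R^{ijk}$ precisely enough to guarantee the exponent mismatch; once that is established, the rationality argument closes the proof immediately.
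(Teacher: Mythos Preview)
Your approach is essentially the paper's: compare rational against irrational contributions in the identity $\mathbf L+cK\mathbf C=0$ and conclude that either $c=0$ or $\mathbf C=0$, the latter being excluded because an $m$-th root metric with $m\ge 3$ is non-Riemannian. The paper, however, bypasses entirely the bookkeeping you flag as ``the hard part'': instead of tracking powers of $A^{1/m}$ through the nonlinear connection to obtain $L^{ijk}=A^{\alpha}R^{ijk}$, it uses the closed formula
\[
L^{ijk}=-\tfrac{1}{2}\,p^{s}G^{ijk}_{s},\qquad p^{s}=a^{s00\cdots 0}/K^{m-2},
\]
and invokes Lemma~\ref{lemG} (the spray coefficients $G_r$ are rational in $p$) to see at once that $K^{m-2}L^{ijk}=-\tfrac12\,a^{s00\cdots 0}G^{ijk}_{s}$ is rational. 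The isotropic Landsberg condition then reads
\[
a^{s00\cdots 0}G^{ijk}_{s}=c\,K^{m-1}(m-1)(m-2)\bigl(a^{ijk}-a^{ij}a^{k}-a^{jk}a^{i}-a^{ki}a^{j}+2a^{i}a^{j}a^{k}\bigr),
\]
with rational left side and an explicit factor of $K$ on the right, giving the exponent mismatch immediately. So your plan is correct, but the formula $L^{ijk}=-\tfrac12 p^{s}G^{ijk}_{s}$ together with the rationality of the spray is the clean way to carry it out; no delicate bookkeeping of $A^{1/m}$ factors is needed.
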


Taking a trace of Cartan torsion ${\bf C}_y$ and Landsberg curvature ${\bf L}_y$ give us the mean Cartan
torsion ${\bf I}_y$ and mean Landsberg curvature ${\bf J}_y$, respectively. A Cartan metric with  ${\bf J}=0$ and ${\bf J}=cF{\bf I}$ is called weakly Landsberg and isotropic mean Landsberg metric, respectively, where
$c=c(x)$ is a scalar function on $M$. We show that
every $m$-root Cartan spaces of isotropic mean Landsberg curvature reduces to  weakly Landsberg space.

\begin{thm}\label{mainthm1}
Let $(M,K)$ be an $m$-th root Cartan space. Suppose that $K$ has
isotropic mean Landsberg curvature, ${\bf J}+cK{\bf I}=0$ for some
scalar function $c=c(x)$ on $M$. Then $K$ reduces to a weakly
Landsberg metric.
\end{thm}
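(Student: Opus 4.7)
The plan is to adapt the strategy of Theorem \ref{mainthm0} by replacing the Cartan torsion $\mathbf{C}$ and Landsberg curvature $\mathbf{L}$ with their $g$-traces $\mathbf{I}$ and $\mathbf{J}$. The underlying structural feature that drives the argument for $m$-th root Cartan metrics is that
\begin{equation*}
A(x,p)\;:=\;a^{i_1 i_2\cdots i_m}(x)\,p_{i_1}p_{i_2}\cdots p_{i_m}\;=\;K^{m}
\end{equation*}
is a polynomial of degree $m$ in the fibre coordinates $p_{i}$, while $K$ itself contains the irrational factor $A^{1/m}$. The interplay between rational and irrational expressions in $p$ will again force the isotropy factor $c(x)$ to be immaterial.

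The first step is to record explicit formulae for $\mathbf{I}$ and $\mathbf{J}$ of an $m$-th root Cartan metric. Differentiating $K^{2}=A^{2/m}$ twice in $p$ produces the fundamental tensor $g^{ij}$ as $A^{2/m-2}$ times a polynomial expression in $p$ and in the $A$-derivatives; one more $p$-derivative yields the Cartan torsion, and contracting with $g_{ij}$ gives the mean Cartan torsion. The fractional powers of $A$ arising from $g^{ij}$ and from its inverse $g_{ij}$ cancel after contraction, so that $\mathbf{I}$ is a \emph{rational} function of $p$, of the form $P(x,p)/A^{s}$ with $P$ polynomial in $p$ and $s$ a positive integer. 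The analogous calculation, which is essentially contained in the computation used to prove Theorem \ref{mainthm0}, shows that $\mathbf{J}$ is likewise rational in $p$ and contains no factor of $A^{1/m}$.

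Substituting these expressions into the hypothesis $\mathbf{J}+cK\mathbf{I}=0$ and clearing a common power of $A$, the identity takes the shape
\begin{equation*}
Q(x,p)\;+\;c(x)\,A(x,p)^{1/m}\,R(x,p)\;=\;0,
\end{equation*}
where $Q$ and $R$ are polynomials in $p$. Because $m\ge 3$ and $A$ is not a perfect $m$-th power of a polynomial in $p$, the function $A^{1/m}$ is not a rational function of $p$; hence the rational part $Q$ and the irrational part $c\,A^{1/m}R$ must vanish separately. This forces either $c\equiv 0$ or $R\equiv 0$, i.e.\ $\mathbf{I}=0$; in either case the hypothesis reduces to $\mathbf{J}=0$, so $K$ is weakly Landsberg.

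The main obstacle I anticipate is not conceptual but computational: one must track the powers of $A$ that appear in $g^{ij}$, $g_{ij}$, $\mathbf{C}$, $\mathbf{L}$, and their $g$-traces, and verify that after contraction no fractional powers of $A$ survive in $\mathbf{I}$ or $\mathbf{J}$, so that the final splitting into rational and irrational parts is legitimate. This is precisely the bookkeeping that underlies Theorem \ref{mainthm0}, and I would expect to borrow it essentially verbatim, inserting the trace with $g_{ij}$ at the appropriate stage.
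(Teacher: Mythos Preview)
Your proposal is correct and follows essentially the same rational--versus--irrational splitting argument as the paper: one checks that for an $m$-th root Cartan metric both $\mathbf{I}$ and $\mathbf{J}$ are rational in $p$ (the fractional powers of $A$ coming from $g^{ij}$, $g_{ij}$, $p^{s}$ and $C^{ijk}$ cancel after the trace), so the factor $K=A^{1/m}$ in $cK\mathbf{I}$ forces $c=0$ or $\mathbf{I}=0$. Your resolution of the dichotomy is in fact slightly cleaner than the paper's---you observe that either alternative gives $\mathbf{J}=0$ directly, whereas the paper argues (via a somewhat garbled equation that appears to be copied from the proof of Theorem~\ref{mainthm0}) that the second alternative would make $K$ Riemannian and reaches a contradiction.
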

Taking a trace of Berwald curvature of  Cartan metric $K$  gives rise the
$E$-curvature.  The Cartan metric $K$ with ${\bf E}=0$ and ${\bf E}=\frac{n+1}{2}cK{\bf h}$ is called weakly Berwald
 and  isotropic mean Berwald metric, respectively, where $c=c(x)$ is a
scalar function on $M$ and ${\bf h}=h^{ij}dx_idx_j$ is the angular
metric.
\begin{thm}\label{mainthm3}
Let $(M,K)$ be an m-th root Cartan space. Suppose
that $K$ has isotropic mean Berwald curvature ${\bf E}=\frac{n+1}{2} c K{\bf h}$, for some scalar function $c=c(x)$  on $M$. Then $K$ reduces to a weakly Berwald metric.
\end{thm}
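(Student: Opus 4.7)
The plan is to translate the pointwise condition ${\bf E}=\tfrac{n+1}{2}cK{\bf h}$ into an algebraic identity in the fibre coordinates $p_i$ and then exploit the very restrictive polynomial structure that the $m$-th root hypothesis imposes on both sides, in direct analogy with the strategy underlying Theorems \ref{mainthm0} and \ref{mainthm1}. Setting $A:=K^m=a^{i_1\cdots i_m}(x)p_{i_1}\cdots p_{i_m}$, $A^i:=\pa A/\pa p_i$ and $A^{ij}:=\pa^2A/\pa p_i\pa p_j$, a direct differentiation of $K^2=A^{2/m}$ gives
\[
g^{ij}=\frac{1}{m^2A^{(2m-2)/m}}\bigl[mAA^{ij}+(2-m)A^iA^j\bigr],\qquad h^{ij}=\frac{1}{m^2A^{(2m-2)/m}}\bigl[mAA^{ij}+(1-m)A^iA^j\bigr],
\]
so $Kh^{ij}$ is a polynomial in $p$ divided by $m^2A^{(2m-3)/m}$, and every vertical curvature invariant of $K$ inherits a representation of the form (polynomial in $p$)$/A^{k/m}$.

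I would then compute the Cartan spray of $H=\tfrac12K^2$, the induced nonlinear connection, the Berwald tensor ${\bf B}$, and its trace ${\bf E}$ by successive vertical differentiation. For an $m$-th root metric this yields an explicit representation $E^{ij}=U^{ij}(x,p)/A^{N/m}$ with $U^{ij}$ polynomial in $p$ and $N\in\mathbb{Z}_{\ge0}$. Substituting this together with the formula for $h^{ij}$ into the hypothesis and clearing the $A$-denominators leads to a schematic identity
\[
U^{ij}(x,p)\,A^{(2m-3-N)/m}=\tfrac{n+1}{2m^2}\,c(x)\,\bigl[mAA^{ij}+(1-m)A^iA^j\bigr],
\]
whose right-hand side is a non-vanishing polynomial in $p$ times $c(x)$. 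If $m\nmid(2m-3-N)$ then $A^{1/m}$ appears on the left but not on the right, so the decomposition in $\mathbb{Q}(p)[A^{1/m}]$ forces $c\equiv0$. In the residual case $m\mid(2m-3-N)$ the identity is already polynomial in $p$, and a direct comparison---using that a generic $A$ is irreducible of degree $m$ while $\deg A^i=m-1<\deg A$---again forces $c\equiv0$.

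In either case ${\bf E}=0$, so $K$ reduces to a weakly Berwald metric. The principal obstacle is the middle step: carrying the spray-and-vertical-derivative bookkeeping through cleanly enough to produce the integer $N$ and the polynomial $U^{ij}$ in usable form, and in particular to verify that no unexpected cancellation of the right-hand bracket occurs. Once those data are in hand, the algebraic dichotomy between the irrationality of $A^{1/m}$ and the irreducibility of $A$ on a generic fibre settles both cases in exact parallel to the proofs of the two preceding theorems.
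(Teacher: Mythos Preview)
Your strategy is the paper's: contrast the rationality of ${\bf E}$ with the irrationality of $K{\bf h}$ to force $c=0$. The paper compresses this to one line (Lemma~\ref{lemC}): by Lemma~\ref{lemG} the spray coefficients $G_r$, and hence $E^{ij}$, are rational functions of $p$, whereas $\tfrac{n+1}{2}\,cK h^{ij}$ is not; therefore $c=0$ and ${\bf E}=0$. No explicit computation of the spray, of $U^{ij}$, or of an exponent $N$ is performed or needed.

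Two remarks on your write-up. First, the ansatz $E^{ij}=U^{ij}/A^{N/m}$ is not accurate: inverting the linear system $a^{hr00\cdots0}G_r=\tfrac1m\{00\cdots0,h\}$ brings the polynomial $\det\bigl(a^{hr00\cdots0}\bigr)$ into the denominator, which is not in general a power of $A$. This is harmless, because all you actually need is that $E^{ij}$ is \emph{rational} in $p$, and that is immediate from Lemma~\ref{lemG} without any further bookkeeping. Second, once one knows $E^{ij}$ is rational, your residual divisibility case collapses to the single condition $m\mid(2m-3)$, i.e.\ $m=3$; your disposal of it via ``a generic $A$ is irreducible'' is not a valid argument here, since no genericity hypothesis on the metric is assumed. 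The paper does not isolate $m=3$ either, so on this point you are no less rigorous than the source, but you should be aware that the polynomial-comparison step you sketch does not close the case as written.
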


Akbar-Zadeh introduces the non-Riemannian quantity $\bf H$ which is
obtained from the mean Berwald curvature bye the covariant
horizontal differentiation along geodesics. More precisely, the
non-Riemannian quantity ${\bf H} =H^{ij}dx_i\otimes dx_j$ is defined
by $H^{ij}:=E^{ij|s}p_s$. The Cartan metric $K$ is called  of almost
vanishing $\bf H$-curvature if $H^{ij}=\frac{n+1}{2K}\theta h^{ij}$, where $\theta$ is a 1-form on $M$.
\begin{thm}\label{mainthm2}
Let $(M,K)$ be an $n$-dimensional $m$-th root Cartan space. Suppose
that $K$ has almost vanishing $\bf H$-curvature, ${\bf H}=\frac{n+1}{2}K^{-1}\theta {\bf h}$  for some 1-form $\theta$ on $M$. Then $\bf H=0$.
\end{thm}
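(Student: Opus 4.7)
The plan is to exploit the rational/algebraic structure of an $m$-th root Cartan metric together with the irrationality of $K=A^{1/m}$ as a function of the momenta, where $A(x,p):=a^{i_1\cdots i_m}(x)\,p_{i_1}\cdots p_{i_m}$ is a homogeneous polynomial of degree $m$ in $p$. Since $m\ge 3$ and $A$ is generically not a perfect power, $K^{r}=A^{r/m}$ is not a rational function of $p$ for any integer $r$ with $1\le r\le m-1$; this single algebraic fact is the engine of the proof, and the rest is degree counting.

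First I would make the underlying geometry explicit. From $K^{2}=A^{2/m}$ one computes, with $A_{i}:=\pa A/\pa p_{i}$,
\[
g^{ij}=\tfrac{1}{m}A^{\frac{2}{m}-1}A_{ij}+\tfrac{2-m}{m^{2}}A^{\frac{2}{m}-2}A_{i}A_{j},
\]
\[
h^{ij}=g^{ij}-\ell^{i}\ell^{j}=\tfrac{1}{m}A^{\frac{2}{m}-1}A_{ij}+\tfrac{1-m}{m^{2}}A^{\frac{2}{m}-2}A_{i}A_{j},
\]
so that $A^{2}K^{-2}h^{ij}$ is a polynomial in $p$. A parallel but lengthier computation shows that $E^{ij}$ has the form $P^{ij}(x,p)/A^{\alpha}$ with $P^{ij}$ polynomial in $p$ and $\alpha$ a specific non-negative integer; because the nonlinear connection coefficients of an $m$-th root Cartan metric are themselves rational in $p$ with only powers of $A$ in their denominators, the horizontal covariant derivative $\delta/\delta x^{s}$ preserves this class of functions. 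Consequently
\[
H^{ij}=E^{ij|s}p_{s}=\frac{Q^{ij}(x,p)}{A^{\beta}},
\]
with $Q^{ij}$ polynomial in $p$ and $\beta$ an explicit non-negative integer.

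Next I would substitute these expressions into the almost-vanishing hypothesis
\[
H^{ij}=\frac{n+1}{2K}\,\theta\,h^{ij},\qquad \theta(x,p)=\theta^{s}(x)\,p_{s},
\]
and multiply both sides by the least common power of $A$ that clears every denominator. This reduces the hypothesis to a single algebraic identity of the form
\[
R^{ij}(x,p)=S^{ij}(x,p)\,K^{r},
\]
where $R^{ij}$ and $S^{ij}$ are polynomials in $p$ with coefficients depending on $x$, and $r$ is an integer with $1\le r\le m-1$. By the irrationality of $K^{r}$ for such $r$, the identity splits into two independent polynomial identities $R^{ij}\equiv 0$ and $S^{ij}\equiv 0$. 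The first gives $H^{ij}=0$ directly, and the second unwinds to $\theta\,h^{ij}=0$; since $h^{ij}$ has rank $n-1$ with one-dimensional kernel spanned by $p_{j}$ and $\theta$ is linear in $p$, this also forces $\theta\equiv 0$. Either conclusion yields $\mathbf{H}=0$.

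The main obstacle is the degree bookkeeping in the preliminary step: one has to pin down the exact exponents $\alpha$ and $\beta$ in the rational representations of $E^{ij}$ and $H^{ij}$, and then verify that after clearing denominators in the hypothesis the surviving power $r$ of $K$ actually lies strictly between $0$ and $m$ (rather than being a multiple of $m$, which would make both sides polynomial and the separation argument vacuous). Once that accounting is performed correctly, the irrationality of $A^{1/m}$ makes the separation of the identity automatic and delivers the conclusion with essentially no further work.
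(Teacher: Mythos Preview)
Your proposal is correct and follows essentially the same rational-versus-irrational argument as the paper: the paper simply invokes the rationality of the spray coefficients (Lemma~\ref{lemG}) to assert that $H^{ij}$ is rational in $p$, observes that $K^{-1}h^{ij}$ is irrational, and concludes that either $\theta=0$ or $h^{ij}=0$, the latter being impossible. The only difference is that you carry out the degree bookkeeping explicitly (and your worry about $r$ is unfounded: clearing denominators yields $r=1$), whereas the paper leaves it implicit.
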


\section{Preliminaries}\label{sectionP}

A Cartan spaces is a pair
$C^n=(M^n,K(x,p))$ such that the following axioms hold good:\\
1. $K$ is a real positive function on the cotangent bundle $T^*M$,
differentiable on $T^*M_0:= T^*M\setminus\{0\}$ and continuous on the null
section of the canonical projection
\[
\pi^* :T^*M\rightarrow M ;
\]
2. $K$ is positively 1-homogenous with respect to the momenta
$p_i$;\\
3. The Hessian of $K^2$, with the elements
\[
g^{ij}(x,p)=\frac{1}{2}\frac{\partial^2K^2}{\partial p_i\partial
p_j}
\]
is positive-defined on $T^*M_0$.

An $n$-dimensional Cartan space $C^n$ with $m$-th root metric is by
definition a Cartan structure $C^n=(M^n,K(x,p))$ on differentiable
$n$-manifold $M^n$ equipped with the fundamental function $K(x,p)$
such that
\[
K(x,p)=\sqrt[m]{a^{i_1i_2...i_m}(x)p_{i_1}p_{i_2}...p_{i_m}}
\]
where $a^{i_1i_2...i_m}(x)$, depending on the position alone, is
symmetric in all the indices $i_1,i_2,...,i_m$ and $m\geq3$.

From $K(x,y)$ we define Cartan symmetric tensors of order $r\ (1\leq
r\leq m-1)$ with the components
\[
a^{i_1i_2...i_r}(x,p)=\frac{1}{K^{m-r}}a^{i_1i_2...i_rj_1j_2...j_{m-r}}p_{j_1}p_{j_2}...p_{j_{m-r}}
\]
Thus we have
\begin{eqnarray*}
&&a^i=[a^{ii_2i_3...i_m}(x)p_{i_2}p_{i_3}...p_{i_{m}}]\diagup K^{m-1},\\
&&a^{ij}=[a^{iji_3i_4...i_m}(x)p_{i_3}p_{i_4}...p_{i_{m}}]\diagup
K^{m-2},\\
&&a^{ijk}=[a^{ijki_4i_5...i_m}(x)p_{i_4}p_{i_5}...p_{i_{m}}]\diagup
K^{m-1}.
\end{eqnarray*}
The normalized supporting element is given by
\[
l^i=\dot{\partial}^iK,
\]
where $\dot{\partial}^i=\frac{\partial}{\partial p_i}$. The fundamental metrical d-tensor is
\[
g^{ij}=\frac{1}{2}\dot{\partial}^i\dot{\partial}^j K^2
\]
and the angular metrical d-tensor is given by
\[
h^{ij}=K\dot{\partial}^i\dot{\partial}^j K.
\]
The following hold
\begin{eqnarray*}
&&l^i=a^i,\\
&&g^{ij}=(m-1)a^{ij}-(m-2)a^ia^j,\\
&&h^{ij}=(m-1)(a^{ij}-a^ia^j).
\end{eqnarray*}
From the positively 1-homogeneity of the $m$-th root Cartan metrical
function, it follows that
\[
K^2(x,p)=g^{ij}(x,p)p_ip_j=a^{ij}(x,p)p_ip_j .
\]
Since $det(g^{ij})=(m-1)^{n-1}det(a^{ij})$, the regularity of the
$m$-th metric is equivalent to $det(a^{ij})\neq0$. Let us suppose now
that the d-tensor $a^{ij}$ is regular, that is there exists the
inverse matrix $(a^{ij})^{-1}=(a_{ij})$. Obviously, we have
\[
a_i.a^i=1,
\]
where
\[
a_i=a_{is}a^s=\frac{p_i}{K}.
\]
Under these assumptions, we obtain the inverse components
$g_{ij}(x,p)$ of the fundamental metrical d-tensor $g^{ij}(x,p)$,
which are given by
\begin{eqnarray}\
g_{ij}=\frac{1}{m-1}a_{ij}+\frac{m-2}{m-1}a_ia_j.
\end{eqnarray}
We have
\begin{eqnarray*}
&&\dot{\partial}^k(a^{ij})=\frac{(m-2)}{K}[a^{ijk}-a^{ij}a^k],\\
&&\dot{\partial}^k(a^{i})=\frac{(m-1)}{K}[a^{ik}-a^{i}a^k],\\
&&\dot{\partial}^k(a^{i}a^j)=\frac{(m-1)}{K}[a^{ik}a^j+a^{jk}a^i-2a^{i}a^ja^k].
\end{eqnarray*}
The Cartan tensor $C^{ijk}=-\frac{1}{2}(\dot{\partial}^kg^{ij})$ are
given in the form
\begin{eqnarray}
C^{ijk}=-\frac{(m-1)(m-2)}{2K}(a^{ijk}-a^{ij}a^k-a^{jk}a^i-a^{ki}a^j+2a^ia^ja^k).\label{m0}
\end{eqnarray}
The $m$-th Christoffel symbols is defined by
\begin{eqnarray}
\nonumber\{i_1...i_m,j\}=\frac{1}{2(m-1)}(\partial^{i_1}a^{i_2...i_mj}\!\!\!\!&+&\!\!\!\!\!\partial^{i_2}a^{i_3...i_mi_1j}\\
\!\!\!\!&+&\!\!\!\!\! \cdots +\partial^{i_m}a^{i_1...i_{m-1}j}-\partial^{j}a^{i_1...i_m}),
\end{eqnarray}
where the cyclic permutation is applied to $(i_1...i_m)$ in the
first $m$ terms of the right-hand side.

Now, if we write the equations of
geodesics in the usual form
\begin{eqnarray}
\frac{d^2x_i}{ds^2}+2G_i(x,\frac{dx}{ds})=0 ,
\end{eqnarray} then
the quantities $G_i(x,y)$ are  given by
\begin{eqnarray}
a^{hr}G_r=\frac{1}{mK^{m-2}}\{00...0,h\} ,\label{m3}
\end{eqnarray}
where we denote by the index $0$ the multiplying  by $p_i$ as usual,
that is
\[
\{00...0,h\}=\{i_1i_2...i_m,h\}p_{i_1}p_{i_2}...p_{i_m}   ,
a^{hr}=a^{hr00...0}/K^{m-2}.
\]
Using the definition of $a^{hr}$, we can write (\ref{m3}) in
the form
\be
a^{hr00...0}G_r=\frac{1}{m}\{00...0,h\}.\label{C1}
\ee
Differentiating of (\ref{C1}) with respect to $p_i$ yields
\be
a^{hr00...0}G^i_r+(m-2)a^{hri00...0}G^r=\{i00...0,h\},\label{C2}
\ee
where $G^i_r=\dot{\partial}^iG_r$. By differentiating of (\ref{C2}) with respect to $p_j$, we have
\begin{eqnarray*}
a^{hr00...0}G^{ij}_r+(m-2)\big[a^{hrj00...0}G^i_r+a^{hri00...0}G^j_r\big]\!\!\!\!&+&\!\!\!\!\! (m-2)(m-3)a^{hrij00...0}G_r\\
\!\!\!\!&=&\!\!\!\!\!(m-1)\{ij00...0,h\},
\end{eqnarray*}
where $G_r^{ij}=\dot{\partial}^jG^i_r$ constitute the coefficients of the
Berwald connection $B\Gamma=(G^{ij}_r,G^i_r)$. The above  equations can be written in the following forms
\begin{eqnarray}
K^{m-3}\big[Ka^{hr}G^i_r+(m-2)a^{hri}G_r\big]=\{i00...0,h\},
\end{eqnarray}
and
\begin{eqnarray}
\nonumber K^{m-3}\Big[Ka^{hr}G^{ij}_r+(m-2)(a^{hrj}G^i_r+a^{hri}G^j_r)\Big]\!\!\!\!&+&\!\!\!\!\!K^{m-4}(m-2)(m-3)a^{hrij}G_r\\
\!\!\!\!&=&\!\!\!\!\!(m-1)\{ij00...0,h\}.\label{C3}
\end{eqnarray}
By differentiation of (\ref{C3}) with respect to $p_k$, we get the Berwald curvature  of Berwald connection as follows
\begin{eqnarray}
\nonumber K^{m-3}\Big[\!\!\!\!\!\!\!&&\!\!\!\!\!\!\!\!Ka^{hr}G^{ijk}_r+(m-2)[a^{hir}G^{jk}_r+(i,j,k)]\Big]\\
\nonumber \!\!\!\!&+&\!\!\!\!\!(m-2)(m-3)K^{m-5}\Big[K[a^{hijr}G^k_r+(i,j,k)]+(m-4)a^{hijkr}G_r\Big]\\
\!\!\!\!&=&\!\!\!\!\!(m-1)(m-2)\{ijk00...0,h\},\label{m4}
\end{eqnarray}
where $\{... , (ijk)\}$ shows the cyclic permutation of the indices
$i,j,k$ and summation. Multiplying  (\ref{m4}) with $p_h$ yields
\begin{eqnarray}
\nonumber K^{m-2}\Big[\!\!\!\!\!\!\!&&\!\!\!\!\!\!\!\!p^rG_r^{ijk}+(m-2)[a^{ir}G^{jk}_r+(i,j,k)]\Big]\\
\nonumber \!\!\!\!&+&\!\!\!\!\!(m-2)(m-3)K^{m-4}\Big[K[a^{ijr}G^k_r+(i, j, k)]+(m-4)a^{ijkr}G_r\Big]\\
\!\!\!\!&=&\!\!\!\!\!(m-1)(m-2)\{ijk00...0,0\}.\label{m5}
\end{eqnarray}
\begin{rem} \emph{In the equations (\ref{m4}) and (\ref{m5}), we have some
terms with coefficients $(m-3)$ and $(m-4)$. We shall be concerned
mainly with cubic metric $(m=3)$ and quartic  metric $(m=4)$
\[
K^3=a^{ijk}(x)p_ip_jp_k ~~,~~ K^4=a^{hijk}(x)p_hp_ip_jp_k.
\]
For these metrics, it is supposed that the terms
with $(m-3)$ and $(m-4)$ vanish, respectively. For instance,
(\ref{m4}) of a cubic metric is reduced to following}
\[
Ka^{hr} G^{ijk}_r+\{a^{hir}G^{jk}_r+(i,j,k)\}=\{ijk,h\}.
\]
\end{rem}

\section{ Proof of Theorem \ref{mainthm0} }
In this section, we are going to prove  Theorem \ref{mainthm0}.  First, we remark the following.

\begin{lem}{\rm (\cite{YY})}\label{lemG}
\emph{Let $(M,K)$  be an $m$-th root Cartan space. Then the spray
coefficients of $K$ are given by following}
\[
G_r=\frac{1}{m}\{00...0,h\}\diagup a^{hr00...0}.
\]
\end{lem}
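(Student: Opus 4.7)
\medskip

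\noindent\textbf{Proof proposal for Lemma \ref{lemG}.} The plan is to derive the claimed formula for the spray coefficients directly from the geodesic equations of the Cartan (Hamiltonian) structure, specialized to the $m$-th root fundamental function $K(x,p) = \sqrt[m]{a^{i_1\ldots i_m}(x)p_{i_1}\cdots p_{i_m}}$. The key observation is that the formula asserted by the lemma is really equation (\ref{C1}) of the preliminaries, once one identifies $a^{hr00\ldots 0}$ as the natural ``denominator''; so the work consists of justifying that passage rather than of any new geometric input.

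First I would start from the geodesic equations $\frac{d^2 x_i}{ds^2} + 2G_i(x,\tfrac{dx}{ds}) = 0$ of the Cartan space and recall that, in terms of the Hamiltonian $\tfrac{1}{2}K^2$ and the cotangent coordinates $p_i = g_{ij}\dot x^j$, the spray coefficients are characterized by
\[
a^{hr}\,G_r \;=\; \tfrac{1}{m\,K^{m-2}}\{00\ldots 0,h\},
\]
which is precisely equation (\ref{m3}). This identity comes from writing out $\tfrac{1}{2}(\partial^h g^{ij})p_ip_j - (\partial_j g^{hi})p_i\dot x^j$ for the $m$-th root metric and reorganizing the partial derivatives of $a^{i_1\ldots i_m}(x)$ into the $m$-th Christoffel symbol $\{i_1\ldots i_m,h\}$ defined in the preliminaries; the $K^{m-2}$ in the denominator appears because lowering $m-2$ indices by contraction with $p$'s multiplies the object by $K^{m-2}$.

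Next I would use the definition $a^{hr} = a^{hr00\ldots 0}/K^{m-2}$ to clear the denominator on the right-hand side, yielding
\[
a^{hr00\ldots 0}\,G_r \;=\; \tfrac{1}{m}\{00\ldots 0,h\},
\]
which is exactly (\ref{C1}). Regularity of the $m$-th root metric means $\det(a^{ij})\neq 0$, equivalently $a^{hr00\ldots 0}$ is invertible (as an $h$-$r$ matrix, with $p$'s frozen), so the linear system can be solved for $G_r$, yielding the displayed formula of the lemma.

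The main (and essentially only) obstacle is bookkeeping: one must carefully track the homogeneity degrees of the tensors $a^{i_1\ldots i_r}$ and confirm that contraction with $p$'s compensates for the $K^{m-2}$ factor, so the symbolic ``division'' by $a^{hr00\ldots 0}$ in the statement is rigorously the matrix inversion guaranteed by the regularity assumption. Once this is made precise the identity is immediate from (\ref{m3}), and no further computation is required.
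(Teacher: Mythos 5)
Your proposal is correct and follows essentially the same route as the paper: the lemma is nothing more than equation (\ref{C1}) of the preliminaries (itself obtained from (\ref{m3}) by clearing the $K^{m-2}$ via $a^{hr}=a^{hr00...0}/K^{m-2}$), solved for $G_r$ using the regularity assumption $\det(a^{ij})\neq 0$. Your remark that the symbolic division by $a^{hr00...0}$ must be read as inversion of the $h$-$r$ matrix is exactly the right way to make the paper's notation rigorous.
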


\bigskip

Now, we can prove the Theorem \ref{mainthm0}.

\bigskip
\noindent {\it\bf Proof of Theorem \ref{mainthm0}}: By assumption, the Cartan metric $K$ has isotropic Landsberg curvature  ${\bf L}=cK{\bf C}$  where $c=c(x)$ is a scalar function on $M$. By definition, we have
\[
L^{ijk}=-\frac{1}{2}p^sG^{ijk}_s,
\]
where
\begin{eqnarray}
\nonumber p^s=g^{sj}p_j\!\!\!\!&=&\!\!\!\!\! [(m-1)a^{sj}-(m-2)a^sa^j]p_j\\
\nonumber\!\!\!\!&=&\!\!\!\!\!(m-1)Ka^s-(m-2)Ka^s\\
\nonumber \!\!\!\!&=&\!\!\!\!\! Ka^s\\
\!\!\!\!&=&\!\!\!\!\! a^{s00...0}\diagup K^{m-2}.
\end{eqnarray}
Then we get
\[
L^{ijk}=-\frac{1}{2}a^{s00...0}\diagup K^{m-2}G^{ijk}_s.
\]
By assumption, we have
\begin{eqnarray*}
-\frac{1}{2K^{m-2}}a^{s00...0}G^{ijk}_s=cKC^{ijk}
\end{eqnarray*}
or equivalently
\begin{eqnarray}
a^{s00...0}G^{ijk}_s=\frac{c}{K^{2-m}}(m-1)(m-2)(a^{ijk}-a^{ij}a^k-a^{jk}a^i-a^{ki}a^j+2a^ia^ja^k).\label{m6}
\end{eqnarray}
By Lemma \ref{lemG}, the left-hand side of (\ref{m6}) is rational
function, while its right-hand side is an irrational function. Thus,
either $c=0$ or $a$ satisfies the following
\begin{eqnarray}
a^{ijk}-a^{ij}a^k-a^{jk}a^i-a^{ki}a^j+2a^ia^ja^k=0.\label{m7}
\end{eqnarray}
Plugging (\ref{m7}) into (\ref{m0}) implies that $C^{ijk}=0$. Hence, $K$ is Riemannian metric, which contradicts with our
assumption. Therefore, $c=0$. This completes the proof.
\qed

\section{ Proof of Theorem \ref{mainthm1} }

The quotient ${\bf J}/{\bf I}$ is regarded as the relative rate of
change of mean Cartan torsion ${\bf I}$ along Cartan geodesics. Then
$K$ is said to be isotropic mean Landsberg metric if ${\bf J}=cK \bf
I$, where $c=c(x)$ is a scalar function on $M$. In this section, we
are going to prove  the Theorem \ref{mainthm1}. More precisely, we
show that every  $m$-th root isotropic mean Landsberg metric reduces
to a weakly Landsberg metric.

\bigskip

\noindent {\it\bf Proof of Theorem \ref{mainthm1}}: The mean Cartan
tensor of $K$ is given by following
\begin{eqnarray*}
I^i\!\!\!\!&=&\!\!\!\!\!g_{jk}C^{ijk}\\
\!\!\!\!&=&\!\!\!\!\!\frac{-(m-2)}{2K}\{a^{ij}_j-\delta^i_ka^k-na^i-\delta^i_ja^j+2a^i\}\\
\!\!\!\!&=&\!\!\!\!\!\frac{-(m-2)}{2K}\{a^{ir}_r-na^i\}.
\end{eqnarray*}
The mean Landsberg curvature of $K$ is given by
\begin{eqnarray*}
J^i\!\!\!\!&=&\!\!\!\!\!g_{jk}L^{ijk}\\
\!\!\!\!&=&\!\!\!\!\![\frac{1}{m-1}a_{jk}+\frac{m-2}{m-1}a_ja_k][-\frac{1}{2}a^{s00...0}\diagup
K^{m-2}G^{ijk}_s].\\
\end{eqnarray*}
Since ${\bf J}=cF \bf I$, then we have
\[
c(m-1)(m-2)\{a^{ijk}-a^{ij}a^k-a^{jk}a^i-a^{ki}a^j+2a^ia^ja^k\}=a^{s00...0}\diagup
K^{m-2}G^{ijk}_s.
\]
Thus we get
\be
a^{s00...0}G^{ijk}_s=c
K^{m-2}(m-1)(m-2)\{a^{ijk}-a^{ij}a^k-a^{jk}a^i-a^{ki}a^j+2a^ia^ja^k\}.\label{ml}
\ee
By  Lemma \ref{lemG}, the left hand side of (\ref{ml}) is a rational function with respect to $y$,
while its right-hand side is an irrational function with respect to $y$. Thus, either
$c=0$ or $a$ satisfies the following
\[
a^{ijk}-a^{ij}a^k-a^{jk}a^i-a^{ki}a^j+2a^ia^ja^k=0.
\]
That implies that $C^{ijk}=0$. Hence, $K$ is Riemannian metric, which contradicts with our assumption. Therefore,
$c=0$. This completes the proof. \qed

\section{ Proof of Theorem \ref{mainthm3} }
Let $(M, K)$ be a Cartan space of dimension $n$. Denote by
$\tau(x,y)$ the distortion of the Minkowski norm $K_{x}$ on
$T^*_{x}M_{0}$,  and $\sigma(t)$ be the geodesic with $\sigma(0)=x$
and $\dot{\sigma}(0)=y$. The rate of change of  $\tau(x,y)$ along
Cartan geodesics $\sigma(t)$ called $S$-curvature. $K$ is said to
have isotropic $S$-curvature if
\[
{\bf S}= (n+1) c K.
\]
where $c=c(x)$ is a scalar function on $M$. $K$ is called of  almost isotropic $S$-curvature if
\[
{\bf S}=(n+1)c K+\eta,
\]
where $c=c(x)$ is a scalar function and $\eta=\eta^i(x)p_i$ is a 1-form on $M$.

\begin{rem}
\emph{By taking twice vertical covariant derivatives  of the $S$-curvature, we get the $E$-curvature
  \[
E^{ij}(x,p):=\frac{1}{2}\frac{\partial^2{\bf S}}{\partial p_i\partial
p_j}.
\]
It is remarkable that, we can get the $E$-curvature by taking a trace of Berwald curvature of  Cartan metric $K$, also. The Cartan metric $K$ is called weakly Berwald
metric if ${\bf E}=0$ and is said to have isotropic mean Berwald
curvature if ${\bf E}=\frac{n+1}{2}cK{\bf h}$, where $c=c(x)$ is a
scalar function on $M$ and ${\bf h}=h^{ij}dx_idx_j$ is the angular
metric.}
\end{rem}

In this section, we are going to prove an extension of Theorem \ref{mainthm3}. More precisely, we prove the following.

\begin{thm}\label{mainthm4}
Let $(M,K)$ be an m-th root Cartan space. Then the following are
equivalent:
\begin{description}
\item[a)]$K$ has isotropic mean Berwald curvature, i.e., ${\bf E}=\frac{n+1}{2} c K{\bf h}$;
\item[b)]$K$ has vanishing $E$-curvature, i.e., ${\bf E}=0$;
\item[c)]$K$ has almost isotropic $S$-curvature, i.e., ${\bf S}=(n+1)cK+\eta$;
\end{description}
where $c=c(x)$ is a scalar function and $\eta=\eta_i(x)y^i$ is a
1-form on $M$.
\end{thm}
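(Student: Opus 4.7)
The plan is to prove the circular chain via the same rationality dichotomy that drives the proofs of Theorems \ref{mainthm0} and \ref{mainthm1}. The implication (b) $\Rightarrow$ (a) is immediate by setting $c\equiv 0$, so the real content is to show that each of (a) and (c) forces the scalar $c$ to vanish identically, collapsing the condition back to (b), and that (b) is equivalent to (c).

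For (a) $\Rightarrow$ (b) I would first extract an explicit formula for $E^{ij}$ on an $m$-th root Cartan space. Recall that $E^{ij}$ is a trace of the Berwald curvature, governed by equation (\ref{m4}) through the symbols $\{ijk0\ldots 0,h\}$, the lower-order Berwald quantities $G^i_r$ and $G^{jk}_r$ (recursively rational in $p$ via (\ref{C2}) and (\ref{C3})), and the inverse of $a^{hr}$. After contracting the appropriate pair of indices and using Lemma \ref{lemG} to eliminate $G_r$ and its derivatives, one sees that $E^{ij}$ is a rational function of $p$, with denominator a power of the polynomial $a^{i_1\ldots i_m}(x)p_{i_1}\cdots p_{i_m}$. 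On the other hand, the right-hand side $\frac{n+1}{2}cKh^{ij}$ contains the $m$-th root $K=\sqrt[m]{a^{i_1\ldots i_m}p_{i_1}\cdots p_{i_m}}$ both explicitly and through the factors of $a^{ij}$ and $a^ia^j$ that assemble $h^{ij}$. A rational function can equal such an algebraic expression only if the coefficient of the irrational part vanishes; the alternative $h^{ij}\equiv 0$ would contradict the non-degeneracy of the angular metric, so we conclude $c=0$ and hence (b).

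The equivalence (b) $\Leftrightarrow$ (c) is then quick. For (b) $\Rightarrow$ (c), the assumption $E^{ij}=\tfrac{1}{2}\dot\partial^i\dot\partial^j\mathbf{S}=0$ forces $\mathbf{S}$ to be linear in $p$, so $\mathbf{S}=\eta^i(x)p_i$, which is (c) with $c\equiv 0$. For (c) $\Rightarrow$ (b), taking two vertical derivatives of $\mathbf{S}=(n+1)cK+\eta$ and using $\dot\partial^i\dot\partial^j K=h^{ij}/K$ together with the linearity of $\eta$ in $p$ yields $E^{ij}=\tfrac{n+1}{2}cK^{-1}h^{ij}$; the same rationality argument as above then forces $c=0$ and (b) follows, closing the cycle.

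The main obstacle is the explicit verification that $E^{ij}$ is rational in $p$, since equation (\ref{m4}) mixes powers $K^{m-3}$ and $K^{m-4}$ of the fundamental function with spray derivatives and with the inverse matrix $a_{hr}$. The key bookkeeping is that $K^{m-1}a^i$, $K^{m-2}a^{ij}$, $K^{m-2}a^{hr}$ and $\{ijk0\ldots 0,h\}$ are all polynomial in $p$, so after multiplying the resulting identity by a sufficient power of $K$ the polynomial contributions and the genuinely fractional-power contributions separate cleanly, isolating the conclusion $c=0$.
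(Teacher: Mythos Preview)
Your proposal is correct and follows essentially the same rationality-versus-irrationality dichotomy as the paper, which packages the argument into two short lemmas (one giving (a)$\Leftrightarrow$(b) and one giving (c)$\Leftrightarrow[\mathbf{S}=\eta]$), both resting on Lemma~\ref{lemG}. The only minor tactical difference is that for (c)$\Rightarrow$(b) the paper argues directly that $\mathbf{S}$ is rational and compares with $(n+1)cK$, whereas you first differentiate twice to land back at $E^{ij}=\tfrac{n+1}{2}cK^{-1}h^{ij}$ and then invoke the rationality of $E$; your route is arguably cleaner, since it avoids the paper's inference of rationality of $\mathbf{S}$ from that of its second derivatives.
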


\bigskip

To prove the Theorem \ref{mainthm4}, first we show the following.
\begin{lem}\label{lemB}
Let $(M,K)$ be an $n$-dimensional $m$-th root Cartan space. Then the
following are equivalent:
\begin{description}
\item[a)]${\bf S}=(n+1)c K+\eta;$
\item[b)]${\bf S}=\eta;$
\end{description}
where $c=c(x)$ is a scalar function and $\eta=\eta^i(x)p_i$ is a 1-form on $M$.
\end{lem}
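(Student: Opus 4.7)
The direction (b)$\Rightarrow$(a) is immediate by taking $c(x)\equiv 0$. For the converse, I plan to follow the rationality-versus-irrationality template that drives the proofs of Theorems \ref{mainthm0} and \ref{mainthm1}. Starting from $\mathbf{S}=(n+1)cK+\eta$, I differentiate twice with respect to the momenta. Using $\dot{\partial}^i K=a^i$ and $\dot{\partial}^j a^i=(m-1)(a^{ij}-a^i a^j)/K$ recorded in Section \ref{sectionP}, together with the fact that $\eta=\eta^i(x)p_i$ is linear in $p$, this yields
\be
E^{ij}=\frac{1}{2}\dot{\partial}^i\dot{\partial}^j\mathbf{S}=\frac{(n+1)c}{2K}\,h^{ij}.
\ee

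Next I would express $E^{ij}$ as the appropriate trace of the Berwald tensor $G^{ijk}_r$ derived in (\ref{m4})--(\ref{m5}). Since Lemma \ref{lemG} presents $G_r$ as a rational function of $p$ (a quotient of polynomials whose coefficients depend only on $x$), all its vertical derivatives $G^i_r$, $G^{ij}_r$, $G^{ijk}_r$ remain rational, and so does $E^{ij}$ after contraction. The right-hand side $h^{ij}/K=(m-1)(a^{ij}-a^i a^j)/K$, on the other hand, carries fractional powers of the polynomial $K^m=a^{i_1\ldots i_m}(x)p_{i_1}\cdots p_{i_m}$ through $a^{ij}=a^{ij00\ldots 0}/K^{m-2}$ and $a^i=a^{i00\ldots 0}/K^{m-1}$; it is therefore irrational in $p$ unless $K$ itself is a rational function.

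If $K$ were rational, the degree-$m$ homogeneous polynomial $K^m$ would be a perfect $m$-th power, forcing $K$ to be linear in $p$ and the fundamental tensor $g^{ij}=(m-1)a^{ij}-(m-2)a^i a^j$ to collapse to rank one, violating axiom 3 of the Cartan-space definition. Equivalently, the algebraic identity implied by rationality would, plugged into (\ref{m0}), give $C^{ijk}=0$, making $K$ Riemannian and contradicting the standing assumption $m\geq 3$ in exactly the same way as in the conclusions of Theorems \ref{mainthm0} and \ref{mainthm1}. Consequently the displayed identity for $E^{ij}$ can hold only when $c(x)\equiv 0$, which gives $\mathbf{S}=\eta$, that is (b).

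The main obstacle I foresee is a tidy verification that the particular contraction defining $E^{ij}$ from $G^{ijk}_r$ does stay in the field of rational functions in $p$. This is routine but bookkeeping-heavy in view of the long formulas (\ref{C2})--(\ref{C3}); however, exactly the same mechanism underlies the proofs of Theorems \ref{mainthm0} and \ref{mainthm1}, and I would transplant it here with only notational adjustments.
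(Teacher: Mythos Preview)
Your argument is correct and rests on the same rational--versus--irrational dichotomy as the paper, but you apply it one level down: you first differentiate $\mathbf S=(n+1)cK+\eta$ twice to reach $E^{ij}=\tfrac{(n+1)c}{2K}h^{ij}$ and then compare rationality there, whereas the paper argues directly that $\mathbf S$ itself is rational in $p$ (being built from the spray coefficients of Lemma~\ref{lemG}) and reads off $c=0$ from $\mathbf S-\eta=(n+1)cK$ without ever passing to $E$. Your route is a small detour---essentially you reprove Lemma~\ref{lemC} inside the proof of Lemma~\ref{lemB}---and it buys you nothing extra, but it does sidestep the paper's slightly glib inference ``$E$ rational $\Rightarrow$ $\mathbf S$ rational,'' which as written is backwards; the honest justification is that $\mathbf S$ is expressed through $G_r$ and hence rational, and your version never needs that step.
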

\begin{proof}
By lemma \ref{lemG}, the $E$-curvature of an m-th root metric is a
rational function . On the other hand, by taking twice vertical
covariant derivatives of the $\bf S$-curvature,we get the
$E$-curvature. Thus $\bf S$-curvature is a rational function.
Suppose that $K$ has almost isotropic $\bf S$-curvature, ${\bf
S}=(n+1)c K+\eta$, where $c=c(x)$ is a scalar function and
$\eta=\eta^i(x)p_i$ is a 1-form on $M$. Then the left hand side of
${\bf S}-\eta=(n+1)c(x) K$ is rational function while the right hand
is irrational function. Thus $c=0$ and ${\bf S}=\eta$.
\end{proof}

\begin{lem}\label{lemC}
Let $(M,K)$ be an $n$-dimensional $m$-th root Cartan space. Then the
following are equivalent:
\begin{description}
\item[a)]${\bf E}=\frac{n+1}{2}cK{\bf h};$
\item[b)]${\bf E}=0;$
\end{description}
where $c=c(x)$ is a scalar function.
\end{lem}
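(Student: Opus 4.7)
The plan is to adapt the rationality-versus-irrationality strategy from the proof of Lemma \ref{lemB}. The direction (b)$\Rightarrow$(a) is trivial (take $c\equiv 0$), so the substance lies in (a)$\Rightarrow$(b).

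First, I would verify that $E^{ij}$ is a rational function of the momenta $p$. By Lemma \ref{lemG}, $G_r$ is rational in $p$; successive vertical derivatives preserve rationality, so $G^i_r$, $G^{ij}_r$ and $G^{ijk}_r$ are rational, and hence so is the trace-type mean Berwald curvature $E^{ij}$.

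Next I would pin down the structure of $Kh^{ij}$. Writing $h^{ij}=(m-1)(a^{ij}-a^ia^j)$ and substituting $a^{ij}=A^{ij}/K^{m-2}$, $a^i=A^i/K^{m-1}$ with $A^{ij},A^i$ polynomial in $p$, one obtains
\[
K h^{ij}=(m-1)\Big[\frac{A^{ij}}{K^{m-3}}-\frac{A^iA^j}{K^{2m-3}}\Big].
\]
For $m\geq 4$ neither $m-3$ nor $2m-3$ is a multiple of $m$, so $Kh^{ij}$ is irrational in $p$. Combined with the previous step, the identity $E^{ij}=\frac{n+1}{2}cKh^{ij}$ equates a rational function to $c(x)$ times an irrational one, forcing $c\equiv 0$ and therefore ${\bf E}=0$.

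The main obstacle is the borderline case $m=3$, where $K^{2m-3}=K^3$ is itself polynomial in $p$ and $Kh^{ij}$ collapses to a rational function, rendering the dichotomy above inconclusive. To cover all $m\geq 3$ uniformly, I would supplement the argument with a homogeneity count: ${\bf S}$ is positively $1$-homogeneous in $p$ (consistent with Lemma \ref{lemB}), so $E^{ij}=\frac{1}{2}\dot{\partial}^i\dot{\partial}^j{\bf S}$ is $(-1)$-homogeneous in $p$, whereas $h^{ij}=K\dot{\partial}^i\dot{\partial}^j K$ is $0$-homogeneous and thus $cKh^{ij}$ is positively $1$-homogeneous. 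Replacing $p$ by $\lambda p$ in the hypothesis yields $\lambda^{-1}E^{ij}=\lambda\cdot\frac{n+1}{2}cKh^{ij}$ for every $\lambda>0$, which forces both sides to vanish identically. Since $Kh^{ij}\not\equiv 0$ for any genuine (non-Riemannian) $m$-th root metric, we conclude $c=0$ and ${\bf E}=0$, completing (a)$\Rightarrow$(b).
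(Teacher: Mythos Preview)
Your core argument—$E^{ij}$ rational versus $cKh^{ij}$ irrational, hence $c=0$—is exactly the paper's proof, which consists of two sentences asserting precisely this dichotomy without further justification. You supply the details the paper omits (why $E^{ij}$ is rational via Lemma~\ref{lemG}, the explicit form of $Kh^{ij}$), and you correctly flag a gap the paper overlooks: for $m=3$ one has $Kh^{ij}=2\big(A^{ij}-A^iA^j/K^3\big)$ with $K^3$ polynomial, so $Kh^{ij}$ is rational and the dichotomy collapses.

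Your homogeneity patch is a genuinely different idea and does close the gap as the lemma is stated, but notice what it actually shows: since $E^{ij}$ is $(-1)$-homogeneous while $cKh^{ij}$ is $(+1)$-homogeneous, equality forces both to vanish for \emph{any} Cartan metric, with no $m$-th root hypothesis at all. This strongly suggests the intended condition is ${\bf E}=\frac{n+1}{2}cK^{-1}{\bf h}$ (parallel to the $K^{-1}$ in Theorem~\ref{mainthm2}, and matching the standard Finsler convention), in which case both sides are $(-1)$-homogeneous and your scaling argument no longer applies. Under that corrected reading, the $m=3$ case is not handled by your analysis—and, to be fair, neither is it handled by the paper's. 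One minor point: $h^{ij}\not\equiv 0$ holds for every Cartan space with $n\ge 2$, Riemannian or not, since $h^{ij}=0$ would force $g^{ij}=l^il^j$ to have rank one; the qualifier ``non-Riemannian'' is unnecessary.
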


\begin{proof}
Suppose that $K$ has isotropic mean Berwald curvature
\begin{eqnarray}
{\bf E}=\frac{n+1}{2}c K {\bf h},\label{m11}
\end{eqnarray}
where $c=c(x)$ is a scalar function. The left hand side of
(\ref{m11}), is a rational function while the right hand is
irrational function. Thus $c=0$ and $E=0$.
\end{proof}

\bigskip

\noindent
{\it\bf Proof of Theorem \ref{mainthm4}:}  By Lemmas \ref{lemB} and \ref{lemC}, we get the proof.

\bigskip

\begin{cor}
Let $(M,K)$ be an $n$-dimensional $m$-th root Cartan space. Suppose
that $K$ has isotropic $\bf S$-curvature, ${\bf S}=(n+1)c(x) K$, for
some scalar function $c=c(x)$ on $M$. Then ${\bf S}=0$.
\end{cor}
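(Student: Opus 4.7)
The plan is to observe that the corollary is an immediate specialization of Lemma \ref{lemB}, obtained by setting the 1-form $\eta$ identically to zero in the definition of almost isotropic $\bf S$-curvature. There is essentially nothing new to prove beyond this reduction.

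More concretely, I would first note that the hypothesis ${\bf S}=(n+1)c(x)K$ of the corollary is exactly the case $\eta^i(x)\equiv 0$ of almost isotropic $\bf S$-curvature ${\bf S}=(n+1)cK+\eta$ introduced in Section 5. In particular, an $m$-th root Cartan space with isotropic $\bf S$-curvature automatically has almost isotropic $\bf S$-curvature.

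Next I would invoke Lemma \ref{lemB}, which states that for any $m$-th root Cartan space the condition ${\bf S}=(n+1)cK+\eta$ forces ${\bf S}=\eta$; equivalently, it forces $c=0$. Specializing this to our setting, where $\eta$ has been taken to vanish, yields immediately ${\bf S}=\eta=0$.

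No obstacle is expected here. The key mechanism driving Lemma \ref{lemB}, namely the rationality of $\bf S$ in the momenta $p_i$ (inherited from the rationality of the spray coefficients in Lemma \ref{lemG}) versus the irrationality of $K=\sqrt[m]{a^{i_1\cdots i_m}(x)p_{i_1}\cdots p_{i_m}}$ for $m\geq 3$, is a property of $K$ itself and is entirely insensitive to whether the 1-form component $\eta$ is present or absent. Hence the corollary follows without any additional calculation.
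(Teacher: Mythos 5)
Your proposal is correct and matches the paper's intent exactly: the corollary is stated immediately after Lemma \ref{lemB} precisely as the specialization $\eta=0$, so that ${\bf S}=(n+1)cK+0$ forces $c=0$ and hence ${\bf S}=\eta=0$ by the same rational-versus-irrational argument. Nothing further is needed.
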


\section{ Proof of Theorem \ref{mainthm2} }

\noindent {\it\bf Proof of Theorem \ref{mainthm2}}: Let $(M,K)$ be
an $n$-dimensional $m$-th root Cartan space. Suppose that $K$ be of almost
vanishing $\bf H$-curvature, i.e.,
\begin{eqnarray}
H^{ij}=\frac{n+1}{2K}\theta h^{ij},\label{m7}
\end{eqnarray}
where $\theta$ is a 1-form on $M$. The angular metric
$h^{ij}=g^{ij}-K^2p^ip^j$ is given bye the following
\begin{eqnarray}
&&h^{ij}=(m-1)(a^{ij}-a^ia^j),\label{m8}
\end{eqnarray}
Plugging (\ref{m8}) into (\ref{m7}) yields
\begin{eqnarray}
H^{ij}=\frac{n+1}{2K}\theta[(m-1)(a^{ij}-a^ia^j)].\label{m9}
\end{eqnarray}
By Lemma \ref{lemG} and $H^{ij}=E^{ij|s}p_s$, it is easy to see that
$H^{ij}$ is rational with respect to $y$. Thus, (\ref{m9}) implies that $\theta=0$ or
\begin{eqnarray}
(m-1)(a^{ij}-a^ia^j)=0.\label{m10}
\end{eqnarray}
By (\ref{m8}) and (\ref{m10}), we conclude that $h^{ij}=0$, which is
impossible. Hence $\theta=0$ and  then $H^{ij}=0$.
\qed

\bigskip
\noindent
Akbar Tayebi and Ali Nankali\\
Department of Mathematics, Faculty  of Science\\
University of Qom \\
Qom. Iran\\
Email:\ akbar.tayebi@gmail.com\\
Email:\ ali.nankali2327@yahoo.com
\bigskip

\noindent
Esmaeil Peyghan\\
Department of Mathematics, Faculty  of Science\\
Arak University\\
Arak 38156-8-8349,  Iran\\
Email: epeyghan@gmail.com

\end{document}